\date{\empty}
\numberwithin{equation}{section} \theoremstyle{plain}
\newtheorem*{thm*}{Main Theorem}
\newtheorem{theorem}{Theorem}[section]
\newtheorem{corollary}[theorem]{Corollary}
\newtheorem*{corollary*}{Corollary}
\newtheorem*{claim*}{Claim}
\newtheorem{lemma}[theorem]{Lemma}
\newtheorem*{lemma*}{Lemma}
\newtheorem{proposition}[theorem]{Proposition}
\newtheorem*{proposition*}{Proposition}
\newtheorem{remark}[theorem]{Remark}
\newtheorem*{remark*}{Remark}
\newtheorem{example}[theorem]{Example}
\newtheorem*{example*}{Example}
\newtheorem*{question*}{Question}
\newtheorem*{definition*}{Definition}
\newtheorem*{acknowledgements*}{ACKNOWLEDGEMENTS}
\begin{document}
\begin{center}
{\large  \bf Centralizer's applications to the inverse along an element}\\
\vspace{0.8cm} {\small \bf   Huihui Zhu$^{[1,2]}$, Jianlong
Chen$^{[1]}$\footnote{Corresponding author

1 Department of Mathematics, Southeast University, Nanjing 210096,
China.

2 CMAT-Centro de Matem\'{a}tica, Universidade do Minho, Braga
4710-057, Portugal.

3 Departamento de Matem\'{a}tica e Aplica\c{c}\~{o}es, Universidade
do Minho, Braga 4710-057, Portugal.

4 Universit\'{e} Paris-Ouest Nanterre-La D\'{e}fense, Laboratoire
Modal'X, 200 avenue de la r\'{e}publique, 92000 Nanterre, France.

Email: ahzhh08@sina.com(H. Zhu), jlchen@seu.edu.cn(J. Chen),
pedro@math.uminho.pt (P. Patr\'{i}cio), xavier.mary@u-paris10.fr (X.
Mary).}, Pedro Patr\'{i}cio$^{[2,3]}$, Xavier Mary$^{[4]}$}
\end{center}

\bigskip

{\bf  Abstract:}  \leftskip0truemm\rightskip0truemm  In this paper,
we first prove that the absorption law for one-sided inverses along
an element holds, deriving the absorption law for the inverse along
an element. We then apply this result to obtain the absorption law
for the inverse along different elements. Also, the reverse order
law and the existence criterion for the inverse along an element are
given by centralizers in a ring. Finally, we characterize the
Moore-Penrose inverse by one-sided invertibilities in a ring with
involution.

\textbf{Keywords:} Absorption laws, Reverse order laws, Centralizers
, Left (Right) inverses along an element, The inverse along an
element

\textbf{AMS Subject Classifications:} 15A09, 16U80, 16W10
\bigskip



\section { \bf Introduction}

It is well-known that $a^{-1}+b^{-1}=a^{-1}(a+b)b^{-1}$ for any
invertible elements $a$ and $b$ in a ring. The equality above is
known as the absorption law. In general, the absorption laws for
group inverses, Drazin inverses, Moore-Penrose inverses,
\{1,3\}-inverses and \{1,4\}-inverses do not hold. So, many papers
\cite{Chen,Jin,Liu} devoted to the study of these aspects.

Recently, authors \cite{Zhu and chen} introduced a new type of
generalized inverse called one-sided inverses along an element,
which can been seen as a generalization of group inverses, Drazin
inverses, Moore-Penrose inverses and the inverse along an element.
It is natural to consider whether the absorption law for such
inverses holds.

In this paper, we first prove that the absorption law for one-sided
inverses along an element holds in a ring. As applications, the
absorption law for the inverse along the same element, i.e.,
$a^{\parallel d}+b^{\parallel d}=a^{\parallel d}(a+b)b^{\parallel
d}$, is obtained. We then apply this result to obtain the absorption
law for the inverse along different elements. Also, the reverse
order law for the inverse along an element is considered.
Furthermore, we derive an existence criterion of the inverse along
an element by centralizers in a ring. Finally, we characterize the
Moore-Penrose inverse in terms of one-sided invertibilities,
extending the results in \cite{Patricio and Mendes,Zhu reverse}.

Let us now recall some notions of generalized inverses. We say that
$a \in R$ is (von Neumann) regular if there exists $x$ in $R$ such
that $a=axa$. Such $x$ is called an inner inverse or \{1\}-inverse
of $a$, and is denoted by $a^{-}$.

Following \cite{Drazin}, an element $a\in R$ is said to be Drazin
invertible if there exist $b\in R$ and positive integer $k$ such
that the following conditions hold:
\begin{center}
(i) $ab$ = $ba$, (ii) $b^2a$ = $b$, (iii) $a^k$ = $a^{k+1}b$.
\end{center}
The element $b$ satisfying the above conditions (i)-(iii) is unique
if it exists, and is denoted by $a^D$. The smallest positive integer
$k$ in condition (iii) is called the Drazin index of $a$ and is
denoted by ind$(a)$. We call $a$ group invertible if $a$ is Drazin
invertible with ind$(a)$ = 1.

Let $*$ be an involution on $R$, that is the involution $*$
satisfies $(x^*)^* = x$, $(xy)^* = y^*x^*$ and $(x+y)^*=x^*+y^*$ for
all $x,y\in R$. An element $a\in R$ (with involution $*$) is
Moore-Penrose invertible \cite{Penrose} if there exists $b\in R$
satisfying the following equations
\begin{center}
${\rm(i)}~aba=a$, ${\rm (ii)}~bab=b$, ${\rm (iii)}~(ab)^*=ab$, ${\rm
(iv)}~(ba)^*=ba$.
\end{center}
Any element $b$ satisfying the equations above is called a
Moore-Penrose inverse of $a$. If such $b$ exists, then it is unique
and is denoted by $a^\dag$. If $x$ satisfies the equations (i) and
(iii), then $x$ is called a $\{1,3\}$-inverse of $a$, and is denoted
by $a^{(1,3)}$. If $x$ satisfies the equations (i) and (iv), then
$x$ is called a $\{1,4\}$-inverse of $a$, and is denoted by
$a^{(1,4)}$.

Throughout this paper, we assume that $R$ is an associative ring
with unity 1. Let $a,b,d\in R$. An element $b$ is called a left
(resp., right) inverse of $a$ along $d$ \cite{Zhu and chen} if
$bad=d$ (resp., $dab=b$) and $b\in Rd$ (resp., $b \in dR$). By
$a_l^{\parallel d}$ and $a_r^{\parallel d}$ we denote a left and a
right inverse of $a$ along $d$, respectively.  Furthermore, an
element $a$ is called invertible along $d$ \cite{Mary} if there
exists $b$ such that $bad=d=dab$ and $b\in d R \cap R d$. Such $b$
is unique if it exists, and is denoted by $a^{\parallel d}$.  It is
known \cite{Zhu and chen} that $a$ is both left and right invertible
along $d$ if and only if it is invertible along $d$. More results on
(one-sided) inverses along an element can be referred to \cite{Zhu
and chen,Zhu reverse,Zhu and p}.

\section{Absorption laws for the inverse along an element}

The main goal of this section is to illustrate that the absorption
laws for (left, right) inverses along an element hold in a ring. We
first begin with the following lemma.

\begin{lemma} \label{absorption law} Let $a,b,d\in R$. Then

\emph{(i)} If $a_l^{\parallel d}$ and $b_r^{\parallel d}$ exist,
then $a_l^{\parallel d}ab_r^{\parallel d}=b_r^{\parallel d}$ and
$a_l^{\parallel d}bb_r^{\parallel d}=a_l^{\parallel d}$.

\emph{(ii)} If $a_r^{\parallel d}$ and $b_l^{\parallel d}$ exist,
then $b_l^{\parallel d}ba_r^{\parallel d}=a_r^{\parallel d}$ and
$b_l^{\parallel d}aa_r^{\parallel d}=b_l^{\parallel d}$.
\end{lemma}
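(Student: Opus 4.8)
The plan is to prove all four identities by a single mechanism: replace whichever factor is known to lie in $Rd$ or in $dR$ by its factorization through $d$, and then collapse the resulting triple product using the defining equation of the appropriate one-sided inverse. First I would record the data of part (i). Writing $x=a_l^{\parallel d}$ and $y=b_r^{\parallel d}$, the definitions supply $x\in Rd$ with $xad=d$, and $y\in dR$ with $dby=d$ (the right-inverse condition, which agrees with the two-sided relation $bad=d=dab$ recorded in the introduction). In particular I may fix $x_0,y_0\in R$ with $x=x_0d$ and $y=dy_0$.

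For $a_l^{\parallel d}a\,b_r^{\parallel d}=b_r^{\parallel d}$ I would feed in $y=dy_0$ on the right, so that $xay=xa(dy_0)=(xad)y_0=dy_0=y$, using only $xad=d$ together with the $dR$-membership of $y$. For $a_l^{\parallel d}b\,b_r^{\parallel d}=a_l^{\parallel d}$ I would instead feed in $x=x_0d$ on the left, so that $xby=(x_0d)by=x_0(dby)=x_0d=x$, using $dby=d$ together with the $Rd$-membership of $x$. The one thing to get right is the pairing of inputs: the first identity must consume the membership $y\in dR$ and the equation $xad=d$, while the second must consume $x\in Rd$ and $dby=d$; feeding in the wrong factorization leaves a triple product that refuses to telescope.

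Part (ii) I would settle by the mirror-image argument, interchanging left with right and $Rd$ with $dR$ throughout. Setting $x'=a_r^{\parallel d}$ and $y'=b_l^{\parallel d}$, the hypotheses now read $x'\in dR$ with $dax'=d$, and $y'\in Rd$ with $y'bd=d$; writing $x'=dx_0'$ and $y'=y_0'd$ and running the same two collapses gives $y'bx'=(y'bd)x_0'=dx_0'=x'$ and $y'ax'=y_0'(dax')=y_0'd=y'$, which are the two asserted equalities.

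I do not expect a genuine obstacle: each identity is one substitution followed by one cancellation, with associativity doing the remaining work, so all the content lies in choosing the correct $d$-factorization for each factor and matching it with the defining equation that actually cancels. It is worth recording why the lemma matters downstream: distributing $a_l^{\parallel d}(a+b)b_r^{\parallel d}=a_l^{\parallel d}a\,b_r^{\parallel d}+a_l^{\parallel d}b\,b_r^{\parallel d}$ and substituting the two identities of part (i) collapses the right-hand side to $b_r^{\parallel d}+a_l^{\parallel d}$, which is precisely the absorption law the section is aiming at.
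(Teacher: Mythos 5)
Your proof is correct and is essentially the paper's own argument: factor each one-sided inverse through $d$ via $a_l^{\parallel d}=x_0d$ and $b_r^{\parallel d}=dy_0$, collapse each triple product with the defining identity, and obtain part (ii) by symmetry. You also read the right-inverse condition as $db\,b_r^{\parallel d}=d$ — which is exactly what the paper's own step $x d b\, b_r^{\parallel d}=xd=a_l^{\parallel d}$ requires — so your flagged choice is the correct one even though the introduction's definition literally reads $dab=b$.
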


\begin{proof}  (i) Note that $a_l^{\parallel d}$ can be written as the form $xd$ for some $x\in R$. Also, there exists $y\in R$ such that $b_r^{\parallel d}=dy$.
Hence, it follows that $a_l^{\parallel d}ab_r^{\parallel d}=a_l^{\parallel d}ady=dy=b_r^{\parallel d}$ and $a_l^{\parallel d}bb_r^{\parallel d}=xdbb_r^{\parallel d}
=a_l^{\parallel d}$.

(ii) By the symmetry of $a$ and $b$.
\end{proof}

Applying Lemma \ref{absorption law}, we get the absorption laws for
one-sided inverses along an element.

\begin{theorem} \label{al} Let $a,b,d\in R$. Then

\emph{(i)} If $a_l^{\parallel d}$ and $b_r^{\parallel d}$ exist,
then $a_l^{\parallel d}+b_r^{\parallel d}=a_l^{\parallel
d}(a+b)b_r^{\parallel d}$.

\emph{(ii)} If $a_r^{\parallel d}$ and $b_l^{\parallel d}$ exist,
then $a_r^{\parallel d}+b_l^{\parallel d}=b_l^{\parallel
d}(a+b)a_r^{\parallel d}$.
\end{theorem}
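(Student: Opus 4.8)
The plan is to derive the absorption law for one-sided inverses directly from Lemma~\ref{absorption law}, since that lemma already packages the two cross-term identities that make the computation collapse. For part~(i), I would start from the right-hand side $a_l^{\parallel d}(a+b)b_r^{\parallel d}$, distribute the sum to obtain $a_l^{\parallel d}ab_r^{\parallel d}+a_l^{\parallel d}bb_r^{\parallel d}$, and then apply the two conclusions of Lemma~\ref{absorption law}(i) termwise. The first summand equals $b_r^{\parallel d}$ by the identity $a_l^{\parallel d}ab_r^{\parallel d}=b_r^{\parallel d}$, and the second summand equals $a_l^{\parallel d}$ by $a_l^{\parallel d}bb_r^{\parallel d}=a_l^{\parallel d}$. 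Adding these gives exactly $a_l^{\parallel d}+b_r^{\parallel d}$, which is the left-hand side.

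Concretely, the single display I would write is
\begin{align*}
a_l^{\parallel d}(a+b)b_r^{\parallel d}
&=a_l^{\parallel d}ab_r^{\parallel d}+a_l^{\parallel d}bb_r^{\parallel d}
=b_r^{\parallel d}+a_l^{\parallel d}
=a_l^{\parallel d}+b_r^{\parallel d}.
\end{align*}
For part~(ii) I would run the mirror-image argument: expand $b_l^{\parallel d}(a+b)a_r^{\parallel d}=b_l^{\parallel d}aa_r^{\parallel d}+b_l^{\parallel d}ba_r^{\parallel d}$ and invoke Lemma~\ref{absorption law}(ii), whose two identities $b_l^{\parallel d}aa_r^{\parallel d}=b_l^{\parallel d}$ and $b_l^{\parallel d}ba_r^{\parallel d}=a_r^{\parallel d}$ reduce this to $b_l^{\parallel d}+a_r^{\parallel d}=a_r^{\parallel d}+b_l^{\parallel d}$, the stated left-hand side.

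Honestly there is no real obstacle here: the genuine content of the absorption law lives entirely in Lemma~\ref{absorption law}, where the representations $a_l^{\parallel d}\in Rd$ and $b_r^{\parallel d}\in dR$ together with the defining equations $a_l^{\parallel d}ad=d$ and $dab_r^{\parallel d}=b_r^{\parallel d}$ are used to kill the cross terms. Once that lemma is in hand, the theorem is a one-line expansion followed by term-by-term substitution, so the only thing to watch is bookkeeping: making sure the correct factor ($a_l^{\parallel d}$ versus $b_r^{\parallel d}$) survives in each summand, and keeping the left/right roles consistent when passing to part~(ii) by the symmetry between $a$ and $b$. I would therefore keep the write-up to the two short displays above and cite Lemma~\ref{absorption law} explicitly for each substitution.
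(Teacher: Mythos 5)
Your proposal is correct and is essentially identical to the paper's own proof: the paper likewise expands $a_l^{\parallel d}(a+b)b_r^{\parallel d}$ and applies the two identities of Lemma \ref{absorption law}(i) termwise, with part (ii) handled by the symmetric argument via Lemma \ref{absorption law}(ii). No further changes are needed.
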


\begin{proof} (i) It follows from Lemma \ref{absorption law}(i) that
$$a_l^{\parallel d}(a+b)b_r^{\parallel d}=a_l^{\parallel d}ab_r^{\parallel d}+a_l^{\parallel d}bb_r^{\parallel d}=a_l^{\parallel d}+b_r^{\parallel d}.$$

(ii) It is a direct check by Lemma \ref{absorption law}(ii).
\end{proof}

As a corollary of Theorem \ref{al}, we obtain the absorption law for
the inverse along an element.

\begin{corollary} \label{all} Let $a,b,d\in R$ and let $a^{\parallel d}$ and $b^{\parallel d}$ exist. Then
$a^{\parallel d}+b^{\parallel d}=a^{\parallel d}(a+b)b^{\parallel d}$.
\end{corollary}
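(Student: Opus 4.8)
The plan is to realize Corollary \ref{all} as an immediate specialization of Theorem \ref{al}(i), the one-sided absorption law. The only conceptual point is to recognize that a \emph{two-sided} inverse along $d$ simultaneously plays the role of a left inverse along $d$ for the first factor and the role of a right inverse along $d$ for the second factor. Once this is established, no new computation is needed.

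First I would recall the defining relations: since $a^{\parallel d}$ exists, it satisfies $a^{\parallel d}ad = d = da\,a^{\parallel d}$ together with $a^{\parallel d}\in dR\cap Rd$. In particular $a^{\parallel d}ad=d$ and $a^{\parallel d}\in Rd$, which are exactly the two conditions defining a left inverse of $a$ along $d$; hence $a^{\parallel d}$ is a valid choice for $a_l^{\parallel d}$. Symmetrically, from the existence of $b^{\parallel d}$ I read off $db\,b^{\parallel d}=b^{\parallel d}$ and $b^{\parallel d}\in dR$, the two conditions defining a right inverse of $b$ along $d$; hence $b^{\parallel d}$ is a valid choice for $b_r^{\parallel d}$. (This is the content of the remark in the introduction that two-sided invertibility along $d$ is equivalent to simultaneous left and right invertibility along $d$.)

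With these identifications in hand, I would apply Theorem \ref{al}(i) to the elements $a_l^{\parallel d}=a^{\parallel d}$ and $b_r^{\parallel d}=b^{\parallel d}$, which yields directly
$$a^{\parallel d}+b^{\parallel d}=a_l^{\parallel d}+b_r^{\parallel d}=a_l^{\parallel d}(a+b)b_r^{\parallel d}=a^{\parallel d}(a+b)b^{\parallel d},$$
as required. There is essentially no obstacle: the work was already done in Lemma \ref{absorption law} and Theorem \ref{al}, and the only thing to verify carefully is that the membership condition $a^{\parallel d}\in dR\cap Rd$ genuinely supplies both $a^{\parallel d}\in Rd$ (needed for the left-inverse role) and $b^{\parallel d}\in dR$ (needed for the right-inverse role). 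A self-contained alternative, should one prefer to avoid invoking Theorem \ref{al}, would be to expand $a^{\parallel d}(a+b)b^{\parallel d}$ and apply Lemma \ref{absorption law}(i) termwise, using $a^{\parallel d}a\,b^{\parallel d}=b^{\parallel d}$ and $a^{\parallel d}b\,b^{\parallel d}=a^{\parallel d}$; but the corollary-level argument above is the most economical.
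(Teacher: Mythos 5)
Your route is exactly the paper's: the corollary is presented there, without a written proof, as an immediate specialization of Theorem \ref{al}(i), obtained by letting the two-sided inverse $a^{\parallel d}$ play the role of $a_l^{\parallel d}$ and $b^{\parallel d}$ the role of $b_r^{\parallel d}$, and your write-up does precisely this. One assertion, however, is wrong as stated: from the existence of $b^{\parallel d}$ you ``read off'' $db\,b^{\parallel d}=b^{\parallel d}$, but the defining relations of the two-sided inverse are $b^{\parallel d}bd=d=db\,b^{\parallel d}$, so what you actually obtain is $db\,b^{\parallel d}=d$, not $b^{\parallel d}$. The confusion traces back to the paper's own statement of the definition, which reads ``$dab=b$'' for a right inverse along $d$; the way that definition is used in the proof of Lemma \ref{absorption law} (namely $xdb\,b_r^{\parallel d}=xd=a_l^{\parallel d}$) shows the intended condition is $dab=d$. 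Since $db\,b^{\parallel d}=d$ and $b^{\parallel d}\in dR$ do hold, $b^{\parallel d}$ is a legitimate choice of $b_r^{\parallel d}$ under the intended definition, and likewise $a^{\parallel d}ad=d$ with $a^{\parallel d}\in Rd$ makes $a^{\parallel d}$ a legitimate $a_l^{\parallel d}$; the rest of your argument --- the appeal to Theorem \ref{al}(i), or equivalently the termwise expansion via Lemma \ref{absorption law}(i) --- then goes through unchanged. So the proof is correct in substance and matches the paper's; just replace the identity $db\,b^{\parallel d}=b^{\parallel d}$ by $db\,b^{\parallel d}=d$.
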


From Corollary \ref{all}, we know that the absorption law  for the
inverse along the same element holds in a ring. It is natural and
interesting to consider whether the absorption law for the inverse
along different elements, i.e., $a^{\parallel d_1}+b^{\parallel
d_2}=a^{\parallel d_1}(a+b)b^{\parallel d_2}$ holds? In general,
$a^{\parallel d_1}+b^{\parallel d_2} = a^{\parallel
d_1}(a+b)b^{\parallel d_2}$ does not hold as the following example
shows.

\begin{example}
{\rm Let $M_2(\mathbb{C})$ be the ring of all $2 \times 2$ complex
matrices. Take $A=\left[\begin{smallmatrix}
              1   & 0 \\
              1 & 0
        \end{smallmatrix}
 \right]$, $B=\left[\begin{smallmatrix}
              0   & 0 \\
              1 & 1
        \end{smallmatrix}
 \right]$, $D_1=\left[\begin{smallmatrix}
              1   & 1 \\
              0 & 0
        \end{smallmatrix}
 \right]$ and $D_2=\left[\begin{smallmatrix}
              1   & 1 \\
              1 & 1
        \end{smallmatrix}
 \right]$ in $M_2(\mathbb{C})$. We get $A^{\parallel D_1}=
 \left[\begin{smallmatrix}
              \frac{1}{2}   & \frac{1}{2} \\
              0 & 0
        \end{smallmatrix}
 \right]$ and $B^{\parallel D_2}=
 \left[\begin{smallmatrix}
              \frac{1}{2}   & \frac{1}{2} \\
              \frac{1}{2} & \frac{1}{2}
        \end{smallmatrix}
 \right]$ by direct calculation. However, $A^{\parallel D_1}+B^{\parallel D_2}=\left[\begin{smallmatrix}
              1   & 1 \\
              \frac{1}{2} & \frac{1}{2}
        \end{smallmatrix}
 \right] \neq \left[\begin{smallmatrix}
              1   & 1 \\
              0 & 0
        \end{smallmatrix}
 \right]=A^{\parallel D_1}(A+B)B^{\parallel D_2}$.}
\end{example}

Next, we consider under what conditions, the absorption law for the
inverse along different elements to hold. First, we recall the
notion of centralizers.

In 1952, Wendel \cite{Wendel} introduced the notion of left
centralizers in group algebras. Then, Johnson \cite{Johnson} gave an
introduction to the theory of left and right centralizers in
semigroups. In 1966, Kellogg \cite{Kellogg} further investigated
centralizers in $H^*$-algebra. Later, Vukman
\cite{Vukman,Vukman2005} and Zalar \cite{Zalar} studied centralizers
in operator algebras and semiprime rings, respectively. More
recently, see \cite{Zhu centralizer}, a map $\sigma$ in a semigroup
$S$ is called a left (resp., right) centralizer if $\sigma(ab)$ =
$\sigma(a)b$ (resp., $\sigma(ab)$ = $a\sigma(b)$) for all $a, b\in
S$. We call $\sigma$ a centralizer if it is both a left and a right
centralizer, i.e., $a\sigma(b)$ = $\sigma(ab)$ = $\sigma(a)b$ for
all $a, b\in S$.

Herein, we remind the reader some examples of left centralizers,
centralizers and bijective centralizers.

\begin{example} \label{Ex} Let $a,x\in R$ and let $\sigma: R\rightarrow R, \sigma(a)= xa$.
Then

\emph{(i)} The map $\sigma: a\mapsto xa$ is a left centralizer.

\emph{(ii)}  The map $\sigma: a\mapsto xa$ is a centralizer if $x$
is a central element.

\emph{(iii)} The map $\sigma: a\mapsto xa$ is a centralizer if $x$
is an invertible central element. Moreover, if $\sigma$ is a
bijective centralizer then so is $\sigma^{-1}$.
\end{example}

\begin{theorem} \label{lambda} Let $\sigma: R \rightarrow R$ be a bijective centralizer and let $a,b,d_1,d_2\in R$ with $d_1=\sigma (d_2)$.
If $a^{\parallel d_1}$ and $b^{\parallel d_2}$ exist, then $a^{\parallel d_1}+b^{\parallel d_2}=a^{\parallel d_1}(a+b)b^{\parallel d_2}$.
\end{theorem}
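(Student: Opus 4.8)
The plan is to reduce the statement to the single-element absorption law of Corollary \ref{all} by showing that the bijective centralizer $\sigma$ alters the underlying element $d_2$ only by a central unit, and that the inverse along an element is insensitive to such a change.

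First I would pin down the shape of a bijective centralizer. Writing $c=\sigma(1)$ and using $\sigma(x)=\sigma(1\cdot x)=\sigma(1)x$ together with $\sigma(x)=\sigma(x\cdot 1)=x\sigma(1)$, one obtains $\sigma(x)=cx=xc$ for all $x\in R$, so $c$ is central. Surjectivity of $\sigma$ produces $x_0$ with $cx_0=1$; since $c$ is central this forces $c$ to be invertible with $c^{-1}$ again central. Hence $d_1=\sigma(d_2)=cd_2$ with $c$ a central unit, which is precisely the situation of Example \ref{Ex}(iii).

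The heart of the argument is the following claim: for a central unit $c$ and any $a,d\in R$, the element $a$ is invertible along $cd$ if and only if it is invertible along $d$, and in that case $a^{\parallel cd}=a^{\parallel d}$. To prove it I would set $g=a^{\parallel d}$ and verify the three defining conditions for the inverse along $cd$. Because $c$ is central, $ga(cd)=c\,gad=cd$ and $(cd)ag=c\,dag=cd$; and because $c$ is a unit the one-sided ideals satisfy $(cd)R=dR$ and $R(cd)=Rd$, so $g\in dR\cap Rd=(cd)R\cap R(cd)$. Uniqueness of the inverse along $cd$ then gives $a^{\parallel cd}=g$. Applying this with $c^{-1}$ in place of $c$ delivers the reverse implication and the full equivalence.

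With the claim in hand the theorem follows quickly. Since $a^{\parallel d_1}=a^{\parallel cd_2}$ exists, applying the claim with the central unit $c^{-1}$ and base element $d_1$ shows that $a^{\parallel d_2}$ exists and equals $a^{\parallel d_1}$ (using $c^{-1}d_1=d_2$). Now $a^{\parallel d_2}$ and $b^{\parallel d_2}$ are inverses along the \emph{same} element $d_2$, so Corollary \ref{all} yields $a^{\parallel d_2}+b^{\parallel d_2}=a^{\parallel d_2}(a+b)b^{\parallel d_2}$, and substituting $a^{\parallel d_2}=a^{\parallel d_1}$ gives the claimed identity. The only genuinely delicate point is the central claim itself, namely the bookkeeping that multiplication by a central unit leaves $dR\cap Rd$ unchanged while transporting the equations $gad=d$ and $dag=d$ to their $cd$-analogues; once that is secured, the rest is formal.
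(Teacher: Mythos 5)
Your proof is correct, and the overall strategy---reduce to the same-element absorption law of Corollary \ref{all} by showing $a^{\parallel d_1}=a^{\parallel d_2}$---coincides with the paper's. The middle step, however, is handled genuinely differently. The paper uses the regularity of $d_2$ (guaranteed by the existence of $b^{\parallel d_2}$) to write $d_1=\sigma(d_2)=d_2d_2^{-}\sigma(d_2)=\sigma(d_2)d_2^{-}d_2$, concludes $d_1R=d_2R$ and $Rd_1=Rd_2$, and then invokes \cite[Corollary 2.4]{Mary and Patricio} to transfer invertibility along $d_1$ to invertibility along $d_2$. You instead observe that on a unital ring every bijective centralizer is multiplication by a central unit $c=\sigma(1)$ (since $\sigma(x)=\sigma(1)x=x\sigma(1)$ and surjectivity plus centrality force $c$ to be invertible), and then verify the defining conditions $ga(cd)=cd=(cd)ag$ and $g\in (cd)R\cap R(cd)=dR\cap Rd$ by hand. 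Your route is more self-contained (no appeal to the external corollary, no use of regularity of $d_2$) and exposes a structural fact the paper leaves implicit in Example \ref{Ex}(iii); the paper's route is the one that would survive in settings (e.g.\ semigroups without unity) where a centralizer need not be given by multiplication by a fixed element, but under the paper's standing hypothesis that $R$ has unity both arguments are valid.
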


\begin{proof} It is known \cite{Mary and Patricio} that the existence of $b^{\parallel d_2}$ implies that $d_2$ is regular. Hence
$d_1=\sigma(d_2)=\sigma(d_2d_2^{-}d_2)=d_2d_2^{-}\sigma(d_2)=\sigma(d_2)d_2^{-}d_2$. So, $d_1R \subseteq d_2R$ and $Rd_1 \subseteq Rd_2$.

As $\sigma$ is bijective, then $d_2=\sigma^{-1}(d_1)$, which also
guarantees that $d_2R \subseteq d_1R$ and $Rd_2 \subseteq Rd_1$.

It follows from \cite[Corollary 2.4]{Mary and Patricio} that if $a$
is invertible along $d_1$ with $d_1R=d_2R$ and $Rd_1=Rd_2$, then
$a^{\parallel d_2}$ exists and $a^{\parallel d_1}=a^{\parallel
d_2}$.

So the result follows from Corollary \ref{all}.
\end{proof}

Next, we give two simple examples to illustrate Theorem \ref{lambda}
above.

\begin{example} {\rm (i) Let $R=M_2(\mathbb{C})$ be the ring of all $2 \times 2$ complex
matrices and let $\sigma: R\rightarrow R, \sigma(M)=
\left[\begin{smallmatrix}
              \frac{1}{2}   & 0 \\
              0 & \frac{1}{2}
        \end{smallmatrix}
 \right]M$ for all $M$ in $R$. Then $\sigma$ is a bijective centralizer by Example
 \ref{Ex}(iii). Take $A=\left[\begin{smallmatrix}
              0   & 0 \\
              1 & 1
        \end{smallmatrix}
 \right]$, $B=\left[\begin{smallmatrix}
              1   & 0 \\
              1 & 0
        \end{smallmatrix}
 \right]$, $D_1=\left[\begin{smallmatrix}
              \frac{1}{2}   & \frac{1}{2} \\
              0 & 0
        \end{smallmatrix}
 \right]$ and $D_2=\left[\begin{smallmatrix}
              1   & 1 \\
              0 & 0
        \end{smallmatrix}
 \right]$. We can check $D_1=\sigma(D_2)$, $A^{\parallel D_1}=
 \left[\begin{smallmatrix}
              1   & 1 \\
              0 & 0
        \end{smallmatrix}
 \right]$ and $B^{\parallel D_2}=
 \left[\begin{smallmatrix}
              \frac{1}{2}   & \frac{1}{2} \\
              0 & 0
        \end{smallmatrix}
 \right]$ by direct calculations. Hence $A^{\parallel D_1}+B^{\parallel D_2}=\left[\begin{smallmatrix}
              \frac{3}{2}   & \frac{3}{2} \\
              0 & 0
        \end{smallmatrix}
 \right]=A^{\parallel D_1}(A+B)B^{\parallel D_2}$.

 (ii) Let $R=\mathbb{Z}_9$ and let $\sigma: R\rightarrow R,\sigma(x)= 2x$ for any $x\in R$. It follows from Example
 \ref{Ex}(iii) that $\sigma$ is a bijective centralizer. Take $a=7$, $b=5$, $d_1=4$ and $d_2=2$. Then $d_1=\sigma(d_2)$, $a^{\parallel
d_1}=7^{\parallel 4}=4$ and
 $b^{\parallel d_2}=5^{\parallel 2}=2$. Moreover, $a^{\parallel
d_1}+b^{\parallel d_2}=6=a^{\parallel d_1}(a+b)b^{\parallel d_2}$.}
\end{example}

In \cite{Mary}, one can get $a^{\parallel a}=a^\#$, $a^{\parallel
a^n}=a^D$ and $a^{\parallel a^*}=a^\dag$. By $R^\#$, $R^D$ and
$R^\dag$ we denote the sets of all group, Drazin and Moore-Penrose
invertible elements in $R$, respectively.

As applications of Theorem \ref{lambda}, it follows that

\begin{corollary}  Let $\sigma: R \rightarrow R$ be a bijective centralizer and let $a,b\in R$. Then

\emph{(i)} If $a=\sigma (b)$, then $a^\#+b^\#=a^\#(a+b)b^\#$, for
$a,b\in R^\#$.

\emph{(ii)} If $a^n=\sigma (b^m)$ for some integers $m$ and $n$,
then $a^D+b^D=a^D(a+b)b^D$, for $a,b\in R^D$.

\emph{(iii)} Let $R$ be a ring with involution. If $a^*=\sigma
(b^*)$, then $a^\dag+b^\dag=a^\dag(a+b)b^\dag$, for $a,b\in R^\dag$.

\emph{(iv)} Let $R$ be a ring with involution. If $a=\sigma (b^*)$,
then $a^\#+b^\dag=a^\#(a+b)b^\dag$, for $a\in R^\#$ and $b\in
R^\dag$.
\end{corollary}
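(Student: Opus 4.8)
The plan is to read each of the four assertions as a single instance of Theorem \ref{lambda}, exploiting the three translations recorded immediately before the corollary: $a^{\parallel a}=a^\#$, $a^{\parallel a^n}=a^D$ and $a^{\parallel a^*}=a^\dag$. For each item I would choose the pair $(d_1,d_2)$ so that $a^{\parallel d_1}$ and $b^{\parallel d_2}$ become exactly the two prescribed generalized inverses, observe that the membership hypotheses (such as $a,b\in R^\#$) guarantee these inverses along $d_1$ and $d_2$ exist, check that the stated relation is precisely $d_1=\sigma(d_2)$, and then quote Theorem \ref{lambda} verbatim. No new computation is needed beyond matching notation.

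Concretely, for (i) I would take $d_1=a$ and $d_2=b$, so that $a^{\parallel d_1}=a^\#$ and $b^{\parallel d_2}=b^\#$, and the hypothesis $a=\sigma(b)$ is literally $d_1=\sigma(d_2)$. For (iii) I would take $d_1=a^*$ and $d_2=b^*$, giving $a^{\parallel d_1}=a^\dag$ and $b^{\parallel d_2}=b^\dag$, with $a^*=\sigma(b^*)$ reading as $d_1=\sigma(d_2)$. For (iv) I would take $d_1=a$ and $d_2=b^*$, so that $a^{\parallel d_1}=a^\#$ and $b^{\parallel d_2}=b^\dag$, and $a=\sigma(b^*)$ again reads $d_1=\sigma(d_2)$. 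In all three cases Theorem \ref{lambda} delivers the displayed absorption law at once, since $\sigma$ is a bijective centralizer by assumption.

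The only item demanding care is (ii), and this is where I would expect a hasty argument to slip. Here I would set $d_1=a^n$ and $d_2=b^m$ so that $a^{\parallel d_1}=a^D$ and $b^{\parallel d_2}=b^D$ through $a^{\parallel a^n}=a^D$, after which the relation $a^n=\sigma(b^m)$ becomes $d_1=\sigma(d_2)$ and Theorem \ref{lambda} finishes. The subtlety is that the identity $a^{\parallel a^n}=a^D$ holds only when $n\geq \mathrm{ind}(a)$, and likewise $m\geq \mathrm{ind}(b)$ is needed; for smaller exponents the inverse along $a^n$ may fail to exist at all, as already $a^{\parallel a}=a^\#$ breaks down for a nonzero nilpotent of index two. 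Consequently the exponents $n,m$ furnished by the hypothesis must be read as admissible, i.e. at least the respective Drazin indices, and one cannot simply enlarge them afterwards without disturbing the centralizer relation $a^n=\sigma(b^m)$. Granting this reading, the identifications are legitimate and the result follows; flagging this index condition is the one genuine obstacle in an otherwise mechanical deduction from Theorem \ref{lambda}.
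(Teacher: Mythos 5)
Your proposal is correct and is exactly the argument the paper intends: the corollary is stated as an immediate application of Theorem \ref{lambda} via the identifications $a^{\parallel a}=a^\#$, $a^{\parallel a^n}=a^D$, $a^{\parallel a^*}=a^\dag$, with $(d_1,d_2)$ chosen as you describe. Your extra caveat on (ii) --- that $n$ and $m$ must be at least the respective Drazin indices for $a^{\parallel a^n}$ and $b^{\parallel b^m}$ to exist and equal $a^D$ and $b^D$ --- is a legitimate reading the paper leaves implicit, but it does not change the substance of the deduction.
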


\section{Characterizations of the inverse along an element}

We first begin with a lemma on the commutativity of the inverse
along an element.

\begin{lemma} \label{commu} Let $\sigma: R \rightarrow R$ be a bijective centralizer and let $a,d\in R$ with $ad=\sigma(da)$. If $a$ is invertible along $d$,
then $a^{\parallel d}a=aa^{\parallel d}$.
\end{lemma}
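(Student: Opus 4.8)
The plan is to reduce the statement to the equality of two idempotents and then obtain that equality by comparing the principal one-sided ideals they generate. Write $b=a^{\parallel d}$ and put $e=ba=a^{\parallel d}a$ and $f=ab=aa^{\parallel d}$; the defining data are $bad=d=dab$ together with $b\in dR\cap Rd$. Writing $b=dp$ for some $p$, the relation $bad=d$ gives $bab=(bad)p=dp=b$, so $b$ is an outer inverse of $a$ and hence $e$ and $f$ are idempotents. The engine of the argument is the elementary fact that two idempotents $e,f$ coincide as soon as $eR=fR$ and $Re=Rf$: the first equality forces $ef=f$ and $fe=e$, the second forces $ef=e$ and $fe=f$, whence $e=ef=f$. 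So it suffices to identify the four principal ideals attached to $e$ and $f$.

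Next I would read these ideals off the defining equations, using $b=dp=qd$. On the right, $e=d(pa)\in dR$ and $ed=bad=d$ give $eR=dR$, while $f=(ad)p\in adR$ and $f(ad)=a(bad)=ad$ give $fR=adR$. On the left, $f=(aq)d\in Rd$ and $df=dab=d$ give $Rf=Rd$, while $e=q(da)\in Rda$ and $(da)e=(dab)a=da$ give $Re=Rda$. The very same equations produce two ``automatic'' identities valid whenever $a^{\parallel d}$ exists, namely $dR=daR$ (from $d=(da)b\in daR$) and $Rd=Rad$ (from $d=b(ad)\in Rad$); these are what let me pass between the $d$-ideals and the $da$- and $ad$-ideals.

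Finally I would bring in the hypothesis. Since $\sigma$ is a centralizer, for every $r\in R$ one has $\sigma(da)\,r=\sigma(da\,r)=da\,\sigma(r)$ and $r\,\sigma(da)=\sigma(r\,da)=\sigma(r)\,da$, so bijectivity ($\sigma(R)=R$) converts $ad=\sigma(da)$ into the ideal equalities $adR=da\,\sigma(R)=daR$ and $Rad=\sigma(R)\,da=Rda$. Chaining all of this, $eR=dR=daR=adR=fR$ and $Re=Rda=Rad=Rd=Rf$, and the idempotent criterion then yields $e=f$, that is $a^{\parallel d}a=aa^{\parallel d}$. I expect the only genuine obstacle to be bookkeeping with sidedness: each of the four ideal identities must be set up on the correct side, drawing the right-hand information from $bad=d$ and the left-hand information from $dab=d$, and the hypothesis must be transported through $\sigma$ using both the left- and the right-centralizer laws; once the ideals are correctly matched, the concluding step is purely formal.
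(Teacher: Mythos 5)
Your proof is correct, and it takes a genuinely different route from the paper's. The paper argues by direct computation: from $d=a^{\parallel d}ad$ it gets $ad=\sigma(da)=\sigma(a^{\parallel d}a\cdot da)=a^{\parallel d}a\,\sigma(da)=a^{\parallel d}a^2d$, and symmetrically (using $\sigma^{-1}$) $da=da^2a^{\parallel d}$; then writing $a^{\parallel d}=xd=dy$ and multiplying the first identity by $y$ on the right and the second by $x$ on the left shows that both $aa^{\parallel d}$ and $a^{\parallel d}a$ equal $a^{\parallel d}a^2a^{\parallel d}$. Your argument instead reduces everything to the standard fact that two idempotents generating the same principal left and right ideals coincide, and I checked each of the ideal identifications $eR=dR=daR$, $fR=adR$, $Rf=Rd=Rad$, $Re=Rda$, as well as the transport $adR=\sigma(da)R=da\,\sigma(R)=daR$ and $Rad=\sigma(R)\,da=Rda$: all are sound. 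This Green's-relations flavour is very much in the spirit of Mary's framework for the inverse along an element, and it buys two small things the paper's computation does not make visible: you only use surjectivity of $\sigma$ (the paper's proof invokes $\sigma^{-1}$, hence full bijectivity), and the argument is transparently valid in any monoid since only multiplicative structure appears. The price is length; the paper's two-line computation is more economical. Either proof is acceptable.
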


\begin{proof} We have $$ad=\sigma(da)=\sigma(a^{\parallel d}ad a)=a^{\parallel d}a\sigma(da)=a^{\parallel d}a^2d. \eqno(3.1)$$  Also, as $\sigma$ is bijective,
then $$da=\sigma^{-1}(ad)=\sigma^{-1}(adaa^{\parallel d})=\sigma^{-1}(\sigma(da)aa^{\parallel d})=da^2a^{\parallel d}.\eqno(3.2)$$

As $a^{\parallel d}\in dR \cap Rd$, then there exist $x,y\in R$ such
that $a^{\parallel d}=xd=dy$. Multiplying the equality (3.1) by $y$
on the right yields $aa^{\parallel d}=ady=a^{\parallel
d}a^2dy=a^{\parallel d}a^2a^{\parallel d}$. Multiplying the equality
(3.2) by $x$ on the left yields $a^{\parallel
d}a=xda=xda^2a^{\parallel d}=a^{\parallel d}a^2a^{\parallel d}$.

Hence, $a^{\parallel d}a=aa^{\parallel d}$.
\end{proof}

In \cite[page 7]{Zhu reverse}, the authors gave a counterexample to
illustrate that reverse order law for the inverse along an element
does not hold in general.

The following theorem, extending \cite[Theorem 2.14]{Zhu reverse},
considers the reverse order law for the inverse along an element,
under a generalized commutativity condition.

\begin{theorem} \label{reverse lambda} Let $\sigma: R \rightarrow R$ be a bijective centralizer and let $a,b,d\in R$ with $ad=\sigma(da)$. If $a^{\parallel d}$
and $b^{\parallel d}$ exist, then

\emph{(i)} $(ab)^{\parallel d}$ exists and $(ab)^{\parallel
d}=b^{\parallel d} a^{\parallel d}$.

\emph{(ii)} $(ba)^{\parallel d}$ exists and $(ba)^{\parallel
d}=a^{\parallel d} b^{\parallel d}$.
\end{theorem}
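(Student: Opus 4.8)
My plan is to prove each part by checking that the proposed element satisfies the three defining conditions of the inverse along $d$ and then invoking uniqueness. Throughout I write $p=a^{\parallel d}$ and $q=b^{\parallel d}$, and recall that $p,q\in dR\cap Rd$ and that $pad=d=dap$, $qbd=d=dbq$. By Lemma \ref{commu} the hypothesis $ad=\sigma(da)$ yields $pa=ap$, which I abbreviate by $e:=pa=ap$. For (i) the plan is to show that $qp$ is the inverse of $ab$ along $d$, i.e. $qp\in dR\cap Rd$, $(qp)(ab)d=d$ and $d(ab)(qp)=d$; part (ii) is the symmetric statement for $pq$ and $ba$.

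The membership $qp\in dR\cap Rd$ is immediate from $q\in dR$ and $p\in Rd$. The first equation is also quick once I record the idempotent identities produced by $e$: from $pad=d$ and $dap=d$ one gets $ed=d=de$, and since $q\in dR\cap Rd$ this gives $eq=q=qe$; in particular $qpa=qe=q$, so $(qp)(ab)d=(qpa)bd=qbd=d$.

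The real obstacle is the second equation $d(ab)(qp)=d$: here $a$ and $p$ are separated by the factor $bq$, and the tempting simplification $bqp=p$ is false in general, so commutativity cannot be applied naively. I would get around this by introducing the idempotents $g:=qb$ and $h:=bq$ and proving the absorption identities $ge=e$ and $eh=e$. Indeed $gd=d$ together with $p\in dR$ forces $gp=p$, whence $ge=(qb)(pa)=(qbp)a=pa=e$; dually $dh=d$ together with $p\in Rd$ forces $ph=p$, whence $eh=(ap)(bq)=a(ph)=ap=e$. Since also $dae=da(pa)=(dap)a=da$, these give $d(ab)q=da\,h=(dae)h=da(eh)=dae=da$, and multiplying by $p$ on the right yields $d(ab)(qp)=dap=d$. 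This chain $ge=e,\ eh=e\Rightarrow d(ab)q=da$ is the crux, and finding it is the step I expect to be the main difficulty.

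For (ii) the two equations trade places. Now $d(ba)(pq)=d$ is the easy one, because $a$ and $p$ are adjacent: $apq=eq=q$ gives $d(ba)(pq)=db(apq)=dbq=d$. The harder equation $(pq)(ba)d=d$ I would prove by mirroring the argument above: from $pad=d$ one has $ead=(ap)(ad)=a(pad)=ad$, so using $ge=e$ one gets $qbad=(qb)(ead)=(ge)\,ad=e\,ad=ad$, and therefore $(pq)(ba)d=p(qbad)=pad=d$. With $pq\in dR\cap Rd$ again immediate, uniqueness of the inverse along $d$ forces $(ba)^{\parallel d}=pq$, which completes the plan.
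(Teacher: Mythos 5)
Your proof is correct, and the overall strategy---verifying directly that $b^{\parallel d}a^{\parallel d}$ (resp.\ $a^{\parallel d}b^{\parallel d}$) lies in $dR\cap Rd$ and satisfies the two defining equations, with Lemma \ref{commu} supplying $a^{\parallel d}a=aa^{\parallel d}$---matches the paper's. The genuine divergence is in the equation you rightly flag as the obstacle, $d(ab)(qp)=d$. The paper dispatches it by using the hypothesis $ad=\sigma(da)$ a second time: it rewrites $da=\sigma^{-1}(ad)=\sigma^{-1}(a)d$ and slides $\sigma^{-1}(a)$ past $dbb^{\parallel d}=d$, getting $dabb^{\parallel d}a^{\parallel d}=\sigma^{-1}(a)(dbb^{\parallel d})a^{\parallel d}=\sigma^{-1}(ad)a^{\parallel d}=daa^{\parallel d}=d$. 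You instead never touch $\sigma$ again after invoking Lemma \ref{commu}: from the single commutation $e=pa=ap$ you manufacture the absorption identities $ge=e$ and $eh=e$ for $g=qb$, $h=bq$ (each checked correctly from $q b d=d$, $dbq=d$ and $p\in dR\cap Rd$), and deduce $d(ab)q=da$ by purely multiplicative bookkeeping. Both are valid; the paper's computation is shorter, while yours isolates exactly what is needed---only the conclusion of Lemma \ref{commu}, not the hypothesis $ad=\sigma(da)$ itself---so in spirit it proves the slightly more general statement that the reverse order law holds whenever $a^{\parallel d}$ commutes with $a$. Your derivation of $qe=q$ and $qpa=q$ also silently reproves the half of Lemma \ref{absorption law}(ii) that the paper simply cites. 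Like the paper's argument (cf.\ its Remark), yours uses no unity or additive structure beyond what is written, so it too works in a semigroup.
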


\begin{proof}  Applying Lemma \ref{absorption law}(ii), it follows $b^{\parallel d}aa^{\parallel d}=b^{\parallel d}$. Also, $a^{\parallel d}a=aa^{\parallel d}$
by Lemma \ref{commu}. Then $b^{\parallel d}a^{\parallel d}abd=b^{\parallel d}aa^{\parallel d}bd=b^{\parallel d}bd=d$ and
$dabb^{\parallel d} a^{\parallel d}=\sigma^{-1}(ad)bb^{\parallel d} a^{\parallel d}=\sigma^{-1}(a)(dbb^{\parallel d})a^{\parallel d}=\sigma^{-1}(ad)a^{\parallel d}
=daa^{\parallel d}=d$. Finally, $b^{\parallel d} a^{\parallel d} \in dR \cap Rd$ since $a^{\parallel d}\in dR \cap Rd$ and $b^{\parallel d}\in dR \cap Rd$.

Hence, $ab$ is invertible along $d$ and $(ab)^{\parallel
d}=b^{\parallel d} a^{\parallel d}$.

(ii) The proof is similar to (i).
\end{proof}

\begin{remark} {\rm According to proof of Theorem \ref{reverse lambda}, one can see that it indeed holds in a semigroup.}
\end{remark}

\begin{corollary} \label{1} Let $a,b,d \in R$ with $ad=da$. If $a^{\parallel d}$ and $b^{\parallel d}$ exist, then

\emph{(i)} $(ab)^{\parallel d}$ exists and $(ab)^{\parallel
d}=b^{\parallel d} a^{\parallel d}$.

\emph{(ii)} $(ba)^{\parallel d}$ exists and $(ba)^{\parallel
d}=a^{\parallel d} b^{\parallel d}$.
\end{corollary}

Let $R=\mathbb{Z}_7$. Then 5 is invertible along 3 and $5^{\parallel
3}=3$. We notice the following fact, i.e., $5$ is also invertible
along $ 2\cdot 3$ and $5^{\parallel {2 \cdot 3}}=3=5^{\parallel 3}$,
and $2 \cdot 5$ is also invertible along $3$ and $(2 \cdot
5)^{\parallel 3}=5=4 \cdot 3=2^{-1} \cdot 5^{\parallel 3}$.
Motivated by this, we get the following proposition.

\begin{proposition} \label{sigma} Let $\sigma: R \rightarrow R$ be a bijective centralizer and let $a,d\in R$. Then

\emph{(i)} $a^{\parallel d}$ exists if and only if $a^{\parallel
\sigma (d)}$ exists. Moreover, $a^{\parallel \sigma
(d)}=a^{\parallel d}$.

\emph{(ii)} $a^{\parallel d}$ exists if and only if $(\sigma
(a))^{\parallel d}$ exists. Moreover, $(\sigma (a))^{\parallel d}=
\sigma^{-1} (a^{\parallel d})$.
\end{proposition}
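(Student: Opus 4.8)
The plan is to treat the two parts separately, in each case reducing to identities for bijective centralizers together with what is already known about the inverse along an element.

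For part (i), I would first record that any centralizer satisfies $\sigma(d)=\sigma(1)d=d\sigma(1)$, so that $\sigma(d)\in dR\cap Rd$; applying the same identity to $\sigma^{-1}$, which is again a bijective centralizer by Example~\ref{Ex}(iii), gives $d=\sigma^{-1}(1)\sigma(d)=\sigma(d)\sigma^{-1}(1)\in\sigma(d)R\cap R\sigma(d)$. Consequently $dR=\sigma(d)R$ and $Rd=R\sigma(d)$. With these ideal equalities in hand, both the existence statement and the equality $a^{\parallel\sigma(d)}=a^{\parallel d}$ follow at once from \cite[Corollary~2.4]{Mary and Patricio}, exactly as in the proof of Theorem~\ref{lambda}; taking $d$ and $\sigma(d)$ in either order supplies both directions of the ``if and only if''. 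Note that no regularity hypothesis is needed here, since the inclusions defining the two ideals come directly from the centralizer identities rather than from an inner inverse of $d$.

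For part (ii), I would verify directly that $\sigma^{-1}(a^{\parallel d})$ is the inverse of $\sigma(a)$ along $d$. Writing $c=a^{\parallel d}$, the core computation is to simplify the two products $\sigma^{-1}(c)\,\sigma(a)$ and $\sigma(a)\,\sigma^{-1}(c)$. Using that $\sigma$ is a right centralizer, $c\,\sigma(a)=\sigma(ca)$, whence $\sigma^{-1}(c)\,\sigma(a)=\sigma^{-1}\!\big(c\,\sigma(a)\big)=ca$; symmetrically, $\sigma(a)\,\sigma^{-1}(c)=ac$. These identities immediately yield $\sigma^{-1}(c)\,\sigma(a)\,d=cad=d$ and $d\,\sigma(a)\,\sigma^{-1}(c)=dac=d$. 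The membership $\sigma^{-1}(c)\in dR\cap Rd$ is then read off from $c\in dR\cap Rd$ together with the left and right centralizer identities for $\sigma^{-1}$, which carry $dR$ into $dR$ and $Rd$ into $Rd$. This shows that $\sigma(a)$ is invertible along $d$ with $(\sigma(a))^{\parallel d}=\sigma^{-1}(c)=\sigma^{-1}(a^{\parallel d})$. The converse implication is obtained by symmetry: applying the forward direction with the bijective centralizer $\sigma^{-1}$ in place of $\sigma$ and $\sigma(a)$ in place of $a$, and using $\sigma^{-1}(\sigma(a))=a$, recovers the existence of $a^{\parallel d}$ from that of $(\sigma(a))^{\parallel d}$.

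I expect the main obstacle to lie in the bookkeeping of the product simplifications in part (ii): one must apply the left versus right centralizer property of $\sigma$ and of $\sigma^{-1}$ on the correct side, and track which factor the map is absorbed into at each step. Once these two identities are correctly established, the remainder is a routine check against the defining conditions $bad=d=dab$ and $b\in dR\cap Rd$ for invertibility along $d$.
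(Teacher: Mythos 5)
Your proof is correct and follows essentially the same route as the paper: part (i) is reduced to the equalities $dR=\sigma(d)R$ and $Rd=R\sigma(d)$ followed by an appeal to \cite[Corollary 2.4]{Mary and Patricio}, and part (ii) is the direct verification that the paper leaves to the reader. The only noteworthy divergence is in how you obtain the inclusions $\sigma(d)R\subseteq dR$ and $R\sigma(d)\subseteq Rd$: the paper derives them from the regularity of $d$ (guaranteed by the existence of $a^{\parallel d}$) via $\sigma(d)=dd^{-}\sigma(d)=\sigma(d)d^{-}d$, whereas you observe that in a unital ring $\sigma(d)=\sigma(1)d=d\sigma(1)$ already gives them, so the regularity of $d$ is not needed for this step --- a mild simplification; your explicit computation $\sigma^{-1}(c)\sigma(a)=ca$ and $\sigma(a)\sigma^{-1}(c)=ac$ in part (ii), together with the membership check $\sigma^{-1}(c)\in dR\cap Rd$, is exactly the "direct check" the paper has in mind, and the converse by symmetry via $\sigma^{-1}$ matches the paper's treatment of the reverse implication in (i).
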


\begin{proof} (i) ``$\Rightarrow$'' As $a^{\parallel d}$ exists, then $d$ is regular and hence $\sigma(d)=dd^{-}\sigma(d)=\sigma(d)d^{-}d$. So,
 $\sigma(d)R \subset dR$ and $R\sigma(d)\subset Rd$. Also, $\sigma$ is bijective, then $d=\sigma(d)\sigma^{-1}(1)=\sigma^{-1}(1)\sigma(d)$ implies
  $dR\subset \sigma(d)R$ and $Rd\subset R\sigma(d)$. Hence, $dR= \sigma(d)R$ and $Rd= R\sigma(d)$. It follows from \cite[Corollary 2.4]{Mary and Patricio}
  that $a^{\parallel \sigma (d)}$ exists and $a^{\parallel \sigma (d)}=a^{\parallel d}$.

``$\Leftarrow$'' By noting that $\sigma^{-1}$ is a bijective
centralizer.

(ii) One can check it directly.
\end{proof}

We next give an existence criterion of the inverse along an element
by centralizers in a ring. Herein, a lemma is presented.

\begin{lemma} \label{Jacobson} Let $a, b, c\in R$. Then

\emph{(i)} If $(1+ab)c = 1$, then $(1+ba)(1-bca) = 1$.

\emph{(ii)} If $c(1+ab) = 1$, then $(1-bca)(1+ba) = 1$.
\end{lemma}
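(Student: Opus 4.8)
The plan is to prove both parts by a direct algebraic verification: expand the claimed product and reduce it using a single relation extracted from the hypothesis. This is the standard construction behind Jacobson's lemma, where the candidate inverse $1-bca$ is built from the right/left inverse $c$ of $1+ab$. For part (i), the hypothesis $(1+ab)c=1$ expands to $c+abc=1$, so that $abc=1-c$; dually, for part (ii) the hypothesis $c(1+ab)=1$ gives $c+cab=1$, i.e. $cab=1-c$. These are the only facts I would invoke.

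First I would expand the product in (i):
\[
(1+ba)(1-bca)=1+ba-bca-ba\,bca.
\]
The only nonroutine term is the cubic $ba\,bca$, and the key step is to regroup it, using associativity, as $b(abc)a$. Substituting $abc=1-c$ turns this into $b(1-c)a=ba-bca$, and the four remaining terms then cancel in pairs to leave exactly $1$.

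For part (ii) I would proceed symmetrically, expanding
\[
(1-bca)(1+ba)=1+ba-bca-bca\,ba
\]
and regrouping the cubic term as $b(cab)a$. Substituting $cab=1-c$ again reduces it to $ba-bca$, after which everything collapses to $1$.

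There is no genuine obstacle here: the entire content is the observation that each cubic term regroups so that the hypothesis applies. The only point requiring any care is the associativity bookkeeping in identifying $ba\,bca=b(abc)a$ and $bca\,ba=b(cab)a$, which is immediate in an associative ring; this step is precisely where the two halves of the lemma diverge, since (i) produces the factor $abc$ while (ii) produces $cab$, matching the one-sided invertibility assumed in each case.
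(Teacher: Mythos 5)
Your computation is correct: in (i) the cubic term regroups as $ba\cdot bca=b(abc)a=b(1-c)a=ba-bca$, and in (ii) as $bca\cdot ba=b(cab)a=b(1-c)a=ba-bca$, after which everything cancels. The paper itself states this lemma without proof (it is the standard one-sided version of Jacobson's lemma), so there is nothing to compare against; your direct expansion is exactly the expected argument.
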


It follows from Lemma \ref{Jacobson} that $1+ab$ is left (right)
invertible if and only if $1+ba$ is left (right) invertible. In
particular,  $1+ab$ is invertible if and only if $1+ba$ is
invertible. Moreover, $(1+ba)^{-1} = 1-b(1+ab)^{-1}a$. This formula
is known as Jacobson's Lemma.

Let $\sigma: R\rightarrow R$ be a bijective centralizer and let $a
\in R$. It is known that if $a$ is regular then so is $\sigma(a)$
and $(\sigma(a))^{-}=\sigma^{-1} (a^{-})$.

\begin{theorem} \label{gmary} Let $a,d\in R$ with $d$ regular. If $\sigma: R \rightarrow R$ is a bijective centralizer, then the following conditions are
equivalent{\rm :}

\emph{(i)} $a^{\parallel d}$ exists.

\emph{(ii)} $u=\sigma (da)+1-dd^{-}$ is invertible.

\emph{(iii)} $v=\sigma (ad)+1-d^{-}d$ is invertible.

In this case, $a^{\parallel d}=\sigma(u^{-1})d=d\sigma(v^{-1})$.
\end{theorem}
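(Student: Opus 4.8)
The plan is to prove the chain (i)$\Rightarrow$(ii)$\Rightarrow$(i), to obtain (iii) for free from (ii) via Jacobson's Lemma \ref{Jacobson}, and to absorb the centralizer through Proposition \ref{sigma}(ii). Writing $a'=\sigma(a)$ and using that $\sigma$ is a centralizer, one has $\sigma(da)=d\sigma(a)=da'$ and $\sigma(ad)=\sigma(a)d=a'd$, so that $u=da'+1-dd^{-}$ and $v=a'd+1-d^{-}d$ are precisely the two units attached to the \emph{untwisted} pair $(a',d)$. Since Proposition \ref{sigma}(ii) asserts that $a^{\parallel d}$ exists iff $(\sigma(a))^{\parallel d}=(a')^{\parallel d}$ exists, with $a^{\parallel d}=\sigma\big((a')^{\parallel d}\big)$, it suffices to establish the equivalence and the formula for $a'$ and then push the expression back through $\sigma$. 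Throughout I would use $dd^{-}d=d$ and the centralizer identities $\sigma(xy)=\sigma(x)y=x\sigma(y)$.

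For (ii)$\Leftrightarrow$(iii) the key observation is that $u=1+d(a'-d^{-})$ and $v=1+(a'-d^{-})d$, since $d(a'-d^{-})=da'-dd^{-}$ and $(a'-d^{-})d=a'd-d^{-}d$. Hence, taking $x=d$ and $y=a'-d^{-}$, Lemma \ref{Jacobson} gives at once that $u=1+xy$ is invertible iff $v=1+yx$ is invertible, which is (ii)$\Leftrightarrow$(iii), and moreover supplies $v^{-1}=1-y\,u^{-1}x$.

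For (ii)$\Rightarrow$(i) and the formula, assume $u$ is invertible; by the previous step $v$ is too. A one-line computation gives the bridge identity $ud=da'd=dv$, whence $u^{-1}d=dv^{-1}=:b'$. Then $b'a'd=u^{-1}(da'd)=u^{-1}(ud)=d$ and $da'b'=(da'd)v^{-1}=(dv)v^{-1}=d$, while $b'=u^{-1}d\in Rd$ and $b'=dv^{-1}\in dR$; thus $b'=(a')^{\parallel d}$. Applying Proposition \ref{sigma}(ii), $a^{\parallel d}$ exists and $a^{\parallel d}=\sigma(b')=\sigma(u^{-1}d)=\sigma(u^{-1})d=\sigma(dv^{-1})=d\sigma(v^{-1})$, the last equalities being the left- and right-centralizer laws; this is the asserted formula.

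The step I expect to be the real work is the converse (i)$\Rightarrow$(ii). Set $b'=\sigma^{-1}(a^{\parallel d})=(a')^{\parallel d}$ via Proposition \ref{sigma}(ii), so that $b'a'd=d=da'b'$, $dd^{-}b'=b'$ and $b'd^{-}d=b'$. I would then produce an inverse of $u$ from $b'$: the natural candidate is $w=b'd^{-}+1-dd^{-}$. A direct check, using $(1-dd^{-})d=0$ and the outer-inverse relation $b'a'b'=b'$, shows that $uw-1=da'(1-dd^{-})$ and $wu-1=b'a'-dd^{-}$ are both square-zero, so $uw$ and $wu$ are units and hence $u$ is invertible; then (iii) follows from the second paragraph. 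The delicate points are purely bookkeeping: keeping the pairing $da\leftrightarrow dd^{-}$, $ad\leftrightarrow d^{-}d$ straight, and invoking $b'a'b'=b'$ at the right moment to force the nilpotency.
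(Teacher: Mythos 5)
Your proof is correct, but it is organized quite differently from the paper's. You reduce everything to the untwisted pair $(a',d)$ with $a'=\sigma(a)$ via Proposition \ref{sigma}(ii), observing that $u$ and $v$ are exactly the classical units $da'+1-dd^{-}$ and $a'd+1-d^{-}d$, and then you give a self-contained proof of the untwisted criterion: for (i)$\Rightarrow$(ii) you build the explicit candidate $w=b'd^{-}+1-dd^{-}$ and check that $uw-1=da'(1-dd^{-})$ and $wu-1=b'a'-dd^{-}$ are square-zero (both computations are right: they use $(1-dd^{-})d=0$, $dd^{-}b'=b'=b'd^{-}d$, and $b'a'b'=b'$, all of which follow from $b'\in dR\cap Rd$ and $b'a'd=d=da'b'$), so that $uw$ and $wu$ are units and $u$ is invertible. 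The paper instead twists $d$ rather than $a$ (via Proposition \ref{sigma}(i), passing to $a^{\parallel\sigma(d)}$), proves right and left invertibility of $u$ separately by exhibiting one-sided inverses of $\sigma(da)dd^{-}+1-dd^{-}$ and $d^{-}d\sigma(ad)+1-d^{-}d$ and shuttling back and forth with Jacobson's Lemma, and for (ii)$\Rightarrow$(i) it cites the existence criterion $d\in dadR\cap Rdad$ from Mary--Patr\'{i}cio rather than verifying the defining equations directly as you do. Your route buys self-containedness (no appeal to the external existence criterion, and the untwisted Corollary \ref{sigma1} drops out as the special case $\sigma=1$ of your middle step rather than the other way around) at the cost of relying on Proposition \ref{sigma}(ii), whose proof the paper leaves as ``check it directly''; the bridge identity $ud=da'd=dv$ and the final formula $a^{\parallel d}=\sigma(u^{-1})d=d\sigma(v^{-1})$ are obtained the same way in both arguments.
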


\begin{proof} (ii) $\Leftrightarrow$ (iii) As $\sigma$ is a centralizer, then $\sigma(da)=d\sigma(a)$ and $\sigma(ad)=\sigma(a)d$.
Take $x=-d$ and $y=\sigma(a)-d^{-}$. Then,
$u=d\sigma(a)+1-dd^{-}=1-xy$ is invertible if and only if
$1-yx=\sigma(a)d+1-d^{-}d=v$ is invertible by Jacobson's Lemma.

(i) $\Rightarrow$ (ii) Suppose that $a^{\parallel d}$ exists. Then
$a^{\parallel \sigma (d)}$ exists by Proposition \ref{sigma}. Since
$a^{\parallel \sigma (d)}\in \sigma (d)R$, there exists $x\in R$
such that $a^{\parallel \sigma (d)}=\sigma (d)x$ and hence $\sigma
(d)=\sigma (d) a a^{\parallel \sigma (d)}=\sigma (d a) \sigma (d)x$,
which implies $d=\sigma(da)dx$. As
$(\sigma(da)dd^{-}+1-dd^{-})(dxd^{-}+1-dd^{-})=1$, then $\sigma
(da)dd^{-}+1-dd^{-}$ is right invertible. Lemma \ref{Jacobson}
ensures that $dd^{-}\sigma (da)+1-dd^{-}$, i.e., $\sigma
(da)+1-dd^{-}$ is  right invertible.

Also as $a^{\parallel \sigma (d)}\in R\sigma (d)$, then there exists
$y\in R$ such that $a^{\parallel \sigma (d)}=y\sigma(d)$. So
$\sigma(d)=y \sigma(d)a\sigma(d)$. Note that
$(d^{-}yd+1-d^{-}d)(d^{-}d\sigma(ad)+1-d^{-}d)=1$ implies that
$d^{-}d\sigma(ad)+1-d^{-}d$ is left invertible. It follows from
Lemma \ref{Jacobson} that $\sigma(ad)+1-d^{-}d$ is left invertible.
 Again, by Lemma \ref{Jacobson}, we get $\sigma (da)+1-dd^{-}$ is left invertible.

Therefore, $u=
\sigma (da)+1-dd^{-}$ is invertible.

(ii) $\Rightarrow$ (i) As $u$ is invertible, then $v$ is also
invertible. From $ud=\sigma(da)d=d\sigma(ad)=dv$, we get
$d=u^{-1}\sigma(dad)=\sigma(u^{-1})dad=dad\sigma(v^{-1})$. Hence,
$a^{\parallel d}$ exists in terms of \cite[Theorem 2.2]{Mary and
Patricio}.

Now, we show that $m=\sigma(u^{-1})d=d\sigma(v^{-1})$ is the inverse
of $a$ along $d$. As $u^{-1}d=dv^{-1}$, then
$\sigma(u^{-1})d=d\sigma (v^{-1})$. It follows that
$mad=\sigma(u^{-1})dad=u^{-1}\sigma(dad)=d=dad\sigma(v^{-1})=dam$
and $m\in dR\cap Rd$.

Thus, $a^{\parallel d}=\sigma(u^{-1})d=d\sigma(v^{-1})$.
\end{proof}

\begin{remark} {\rm The assumption `` a bijective centralizer" in Theorem
\ref{gmary} above cannot be replaced by ``a centralizer''. Such as
let $R=\mathbb{Z}_6$ and let $\sigma: R\rightarrow R, \sigma(x)= 3x$
for all $x\in R$. Then $\sigma$ is a centralizer but not bijective.
Take $a=4$ and $d=2$ in $R$. Then $a$ is invertible along $d$ and
$a^{\parallel d}=4$. However, $\sigma(da)+1-dd^{-}=3\cdot 2+1-2
\cdot 2=3$ is not invertible.}
\end{remark}

\begin{corollary} {\rm \cite[Theorem 3.2]{Mary and Patricio}} \label{sigma1} Let $a,d\in R$ with $d$ regular.
Then the following conditions are equivalent{\rm :}

\emph{(i)} $a^{\parallel d}$ exists.

\emph{(ii)} $u=da+1-dd^{-}$ is invertible.

\emph{(iii)} $v=ad+1-d^{-}d$ is invertible.

In this case, $a^{\parallel d}=u^{-1}d=dv^{-1}$.
\end{corollary}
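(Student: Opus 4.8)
The plan is to recognize Corollary \ref{sigma1} as nothing more than the specialization of Theorem \ref{gmary} to the identity map, so that essentially no new work is required. The first step is to verify that $\sigma = \mathrm{id}_R$ is a bijective centralizer. This is immediate from Example \ref{Ex}(iii): taking $x = 1$, which is an invertible central element, the map $x \mapsto 1 \cdot x$ is a bijective centralizer, and it is precisely the identity on $R$.

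Next I would apply Theorem \ref{gmary} with this choice of $\sigma$. Since $\sigma$ acts trivially, we have $\sigma(da) = da$ and $\sigma(ad) = ad$, so the elements $u$ and $v$ of Theorem \ref{gmary} become exactly $u = da + 1 - dd^{-}$ and $v = ad + 1 - d^{-}d$, which are conditions (ii) and (iii) of the corollary. The chain of equivalences (i) $\Leftrightarrow$ (ii) $\Leftrightarrow$ (iii) then transfers verbatim, the hypothesis that $d$ be regular being carried over unchanged.

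Finally, for the explicit formula, Theorem \ref{gmary} yields $a^{\parallel d} = \sigma(u^{-1})d = d\sigma(v^{-1})$; since $\sigma$ is the identity this reduces to $a^{\parallel d} = u^{-1}d = dv^{-1}$, which is the asserted expression. There is no genuine obstacle in this argument: the entire content is the observation that the identity qualifies as a bijective centralizer, after which the statement is a direct quotation of the more general theorem. I would remark only that this recovers the previously known \cite[Theorem 3.2]{Mary and Patricio}, thereby placing it within the present centralizer framework.
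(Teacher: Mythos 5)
Your proposal is correct and is exactly the paper's intended derivation: Corollary \ref{sigma1} is stated as an immediate consequence of Theorem \ref{gmary} by taking $\sigma$ to be the identity map, which is a bijective centralizer, so that $u$, $v$ and the formula $a^{\parallel d}=u^{-1}d=dv^{-1}$ specialize verbatim. No further comment is needed.
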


It is known \cite[Theorem 11(ii)]{Mary} that $a\in R^D$ implies that
$a^{\parallel a^n}$ exists for some integer $n$. Note that
$a^{\parallel d}$ exists implies that $d$ is regular from \cite{Mary
and Patricio}. So, applying Theorem \ref{gmary}, we have the
following result which is a new characterization of Drazin inverse
of elements in a ring.

\begin{corollary} \label{Centralizer Drazin inverse} Let $\sigma: R \rightarrow R$ is a bijective centralizer and let $a\in R$.
Then the following conditions are equivalent{\rm:}

\emph{(i)} $a\in R^D$.

\emph{(ii)} $a^n$ is regular and $u=\sigma(a^{n+1})+1-a^n(a^n)^{-}$
is invertible, for some integer $n$.

\emph{(iii)} $a^n$ is regular and $v=\sigma(a^{n+1})+1-(a^n)^{-}a^n$
is invertible, for some integer $n$.

In this case, $a^D=\sigma(u^{-1})a^n=a^n \sigma(v^{-1})$.
\end{corollary}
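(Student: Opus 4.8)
The plan is to deduce this corollary directly from Theorem~\ref{gmary} by specializing the element $d$ to a power $a^n$, and then translating the resulting criterion through the known dictionary between the inverse along $a^n$ and the Drazin inverse. So the work is almost entirely bookkeeping: identifying $u$, $v$ and the formula for $a^{\parallel a^n}$ with the quantities appearing in the corollary.

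First I would isolate the two facts about Drazin inverses that drive the argument. By \cite[Theorem 11(ii)]{Mary}, $a\in R^D$ if and only if $a^{\parallel a^n}$ exists for some integer $n$, and in that case $a^{\parallel a^n}=a^D$. Moreover, the existence of $a^{\parallel a^n}$ forces $a^n$ to be regular, by \cite{Mary and Patricio}. Thus the condition ``$a\in R^D$'' is equivalent to ``$a^{\parallel a^n}$ exists for some integer $n$'', and whenever this holds the inverse along $a^n$ is precisely $a^D$.

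Next I would substitute $d=a^n$ into Theorem~\ref{gmary}, which applies since $a^n$ is regular. With this choice one has $da=a^{n+1}=ad$, hence $\sigma(da)=\sigma(a^{n+1})=\sigma(ad)$, while $dd^{-}=a^n(a^n)^{-}$ and $d^{-}d=(a^n)^{-}a^n$. Consequently the elements $u=\sigma(da)+1-dd^{-}$ and $v=\sigma(ad)+1-d^{-}d$ of Theorem~\ref{gmary} become exactly $u=\sigma(a^{n+1})+1-a^n(a^n)^{-}$ and $v=\sigma(a^{n+1})+1-(a^n)^{-}a^n$, which are conditions (ii) and (iii) of the corollary; and the identity $a^{\parallel d}=\sigma(u^{-1})d=d\sigma(v^{-1})$ becomes $a^{\parallel a^n}=\sigma(u^{-1})a^n=a^n\sigma(v^{-1})$. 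Assembling the implications: for (i)$\Rightarrow$(ii),(iii), if $a\in R^D$ I pick the integer $n$ with $a^{\parallel a^n}$ existing, so $a^n$ is regular and Theorem~\ref{gmary} gives $u,v$ invertible; for (ii)$\Rightarrow$(i) (and symmetrically (iii)$\Rightarrow$(i)), the hypotheses supply a regular $a^n$ with $u$ invertible, so Theorem~\ref{gmary} yields the existence of $a^{\parallel a^n}$, whence $a\in R^D$. In every case $a^D=a^{\parallel a^n}=\sigma(u^{-1})a^n=a^n\sigma(v^{-1})$, giving the final formula.

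The only point demanding care — the ``main obstacle'', such as it is — is the handling of the existential quantifier ``for some integer $n$'': the regularity of $a^n$ and the invertibility of $u$ (or $v$) must be asserted for one and the same $n$, and I would check that this is exactly the integer furnished by \cite[Theorem 11(ii)]{Mary} in the forward direction, and an arbitrary witnessing $n$ in the converse direction (where the existence of $a^{\parallel a^n}$ for any single $n$ already forces Drazin invertibility). Once the quantifier is pinned down, everything else is a verbatim transcription of Theorem~\ref{gmary} with $d=a^n$.
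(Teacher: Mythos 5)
Your proposal is correct and follows exactly the route the paper intends: the paper presents this corollary without a separate proof, justifying it by the remark that $a\in R^D$ corresponds to the existence of $a^{\parallel a^n}$ (with $a^{\parallel a^n}=a^D$), that this forces $a^n$ to be regular, and then by applying Theorem \ref{gmary} with $d=a^n$. Your careful handling of the quantifier over $n$ is a sensible addition but does not change the argument.
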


Taking $d=a$ in Theorem \ref{gmary} above, we get an existence
criterion of the group inverse of $a$ in a ring, extending
\cite[Proposition 2.1]{Patricio2009}, i.e., $a\in R^\#$ exists if
and only if $a^2+1-aa^{-}$ is invertible if and only if
$a^2+1-a^{-}a$ is invertible, for a regular element $a$.

\begin{corollary} Let $a\in R$ be regular. If $\sigma: R \rightarrow R$ is a bijective centralizer, then the following conditions are equivalent{\rm:}

\emph{(i)} $a\in R^\#$.

\emph{(ii)} $u=\sigma(a^2)+1-aa^{-}$ is invertible.

\emph{(iii)} $v=\sigma(a^2)+1-a^{-}a$ is invertible.

In this case, $a^\#=\sigma(u^{-1})a=a\sigma(v^{-1})$.
\end{corollary}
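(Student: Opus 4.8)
The plan is to derive this statement directly as the special case $d=a$ of Theorem~\ref{gmary}. The key translation is the identity $a^{\parallel a}=a^\#$ recorded from \cite{Mary}, which says that the inverse of $a$ along $a$, whenever it exists, is exactly the group inverse; consequently the condition $a\in R^\#$ is literally the same as the existence of $a^{\parallel a}$. Since the hypothesis assumes $a$ is regular, putting $d=a$ the regularity requirement imposed on $d$ in Theorem~\ref{gmary} is automatically satisfied, so the theorem applies without any extra work.

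Next I would substitute $d=a$ into the three equivalent conditions of Theorem~\ref{gmary}. Because $da=a\cdot a=a^2$ and likewise $ad=a^2$, the elements defined there specialize to $u=\sigma(da)+1-dd^{-}=\sigma(a^2)+1-aa^{-}$ and $v=\sigma(ad)+1-d^{-}d=\sigma(a^2)+1-a^{-}a$, which are precisely items (ii) and (iii). Item (i) of Theorem~\ref{gmary}, the existence of $a^{\parallel a}$, becomes $a\in R^\#$ via the identification above, so the three equivalences are immediate.

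Finally, the closed form $a^{\parallel d}=\sigma(u^{-1})d=d\sigma(v^{-1})$ supplied by Theorem~\ref{gmary}, evaluated at $d=a$, yields $a^\#=\sigma(u^{-1})a=a\sigma(v^{-1})$, which is the asserted formula. There is no genuine obstacle in this argument: the only things to verify are the trivial computations $da=ad=a^2$ and the translation $a^{\parallel a}=a^\#$, both of which are immediate from the definitions and the cited result. The entire proof is therefore a one-line appeal to Theorem~\ref{gmary} with $d=a$.
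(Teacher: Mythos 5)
Your proposal is correct and is exactly the paper's own argument: the authors introduce this corollary with the phrase ``Taking $d=a$ in Theorem \ref{gmary} above,'' relying on the identification $a^{\parallel a}=a^{\#}$ and the trivial computation $da=ad=a^{2}$, just as you do. Nothing further is needed.
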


Next, we consider the characterizations of the Moore-Penrose inverse
of a regular element by centralizers in a ring.

It follows from from \cite[Theorem 2.16]{Zhu and chen} that $a\in
R^\dag\Leftrightarrow a\in aa^*aR \Leftrightarrow a\in Raa^*a$,
which allow us to characterize Moore-Penrose inverses in terms of
one-sided invertibilities in a ring. Also, we know \cite[Theorems
2.19 and 2.20]{Zhu and chen} that $a=aa^*ax$ or $a=yaa^*a$ implies
that $a$ is Moore-Penrose invertible and
$a^\dag=a^*ax^2a^*=a^*y^2aa^*$.

\begin{theorem} \label{onesided MP} Let $R$ be a ring with involution and let $a\in R$ be regular.
If $\sigma: R \rightarrow R$ is a bijective centralizer, then the following conditions are equivalent{\rm:}

\emph{(i)} $a\in R^\dag$.

\emph{(ii)} $u=\sigma(aa^*)+1-aa^{-}$ is right (left) invertible.

\emph{(iii)} $v=\sigma(a^*a)+1-a^{-}a$ is right (left) invertible.

In this case,
$a^\dag=a^*(\sigma(u_l^{-1}))^2aa^*=a^*a(\sigma(v_r^{-1}))^2a^*$,
where $u_l^{-1}$, $v_r^{-1}$ denote a left inverse of $u$ and a
right inverse of $v$, respectively.
\end{theorem}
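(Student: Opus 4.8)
The plan is to reduce everything to the two characterizations $a\in R^\dag\Leftrightarrow a\in aa^*aR$ and $a\in R^\dag\Leftrightarrow a\in Raa^*a$ recalled just before the statement, together with the explicit formulas $a=aa^*ax\Rightarrow a^\dag=a^*ax^2a^*$ and $a=yaa^*a\Rightarrow a^\dag=a^*y^2aa^*$. The first thing I would record is that $u$ and $v$ form a Jacobson pair: writing $x=a$ and $y=a^{-}-\sigma(a^*)$ and using the centralizer identities $\sigma(aa^*)=a\sigma(a^*)$ and $\sigma(a^*a)=\sigma(a^*)a$, one checks directly that $u=1-xy$ and $v=1-yx$. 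Hence by Lemma \ref{Jacobson} (the one-sided form of Jacobson's Lemma) $u$ is right (resp.\ left) invertible if and only if $v$ is right (resp.\ left) invertible. This collapses the four one-sided conditions in (ii) and (iii) into matched pairs, so that it suffices to treat $u$ on the left and $v$ on the right and then transport along Jacobson.

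Next I would run the two implications that also produce the stated formulas. If $u$ is left invertible, $u_l^{-1}u=1$, then multiplying on the right by $a$ and using $(1-aa^{-})a=0$ gives $ua=\sigma(aa^*)a$, so $u_l^{-1}\sigma(aa^*)a=a$; pushing $\sigma$ across via $u_l^{-1}\sigma(aa^*)=\sigma(u_l^{-1}aa^*)=\sigma(u_l^{-1})aa^*$ yields $a=\sigma(u_l^{-1})\,aa^*a$. Thus $a\in Raa^*a$, whence $a\in R^\dag$ and $a^\dag=a^*(\sigma(u_l^{-1}))^2aa^*$ by the recalled formula with $y=\sigma(u_l^{-1})$. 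Symmetrically, if $v$ is right invertible, left multiplication by $a$ gives $av=a\sigma(a^*a)$ and then $a=aa^*a\,\sigma(v_r^{-1})$, so $a\in aa^*aR$, $a\in R^\dag$ and $a^\dag=a^*a(\sigma(v_r^{-1}))^2a^*$ with $x=\sigma(v_r^{-1})$. Combined with the Jacobson reduction, each of the four one-sided hypotheses now implies (i), and along the way we obtain exactly the two displayed expressions for $a^\dag$.

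For the converse (i) $\Rightarrow$ (ii),(iii) I would prove more, namely that $u$ (hence, by two-sided Jacobson, $v$) is genuinely invertible, so that both one-sided invertibilities follow at once. The candidate inverse is $u^{-1}=\sigma^{-1}\big((a^\dag)^*a^\dag\big)\,aa^{-}+1-aa^\dag$. To verify $uu^{-1}=u^{-1}u=1$ I would set $e=aa^{-}$, $p=aa^\dag$, $s=(a^\dag)^*a^\dag$ and assemble the Peirce-type identities $eaa^*=aa^*=aa^*p=paa^*$, $aa^*s=saa^*=p$, $ps=sp=s$, together with $pe=e$ and $ep=p$, both immediate from $aa^\dag a=a=aa^{-}a$. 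The only identity needing the involution is $es=s$, which follows from $e(a^\dag)^*=(a^\dag)^*$: since $p(a^\dag)^*=(a^\dag)^*$, one gets $aa^{-}(a^\dag)^*=aa^{-}aa^\dag(a^\dag)^*=aa^\dag(a^\dag)^*=(a^\dag)^*$. With these in hand the two products collapse to $1$.

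The step I expect to be the genuine obstacle is precisely this last verification, because the idempotents $aa^{-}$ and $aa^\dag$ attached to $u$ and to the Moore-Penrose data are \emph{different}: one cannot simply invoke Theorem \ref{gmary} with $d=a^*$, since that produces the symmetric idempotents $(aa^{-})^*$ and $(a^{-}a)^*$ rather than $aa^{-}$ and $a^{-}a$. The whole content of the theorem is that one-sided invertibility, rather than the full invertibility required in Theorem \ref{gmary}, already forces Moore-Penrose invertibility, and that conversely the Moore-Penrose inverse repairs the mismatch between the two idempotents through the relations $pe=e$, $ep=p$ and $es=s$ above; getting these bookkeeping identities right for an \emph{arbitrary} inner inverse $a^{-}$ is where the care lies.
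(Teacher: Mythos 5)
Your proposal is correct, and for the implications (ii)/(iii) $\Rightarrow$ (i) together with the two displayed formulas it coincides with the paper: the same Jacobson pairing of $u$ and $v$ via $x=\pm a$, $y=\pm(\sigma(a^*)-a^{-})$, the same identities $ua=\sigma(aa^*a)=av$, and the same appeal to the cited results $a=yaa^*a\Rightarrow a^\dag=a^*y^2aa^*$ and $a=aa^*ax\Rightarrow a^\dag=a^*ax^2a^*$. Where you genuinely diverge is in (i) $\Rightarrow$ (ii): the paper starts from $a=aa^*ax$ (via \cite[Theorem 2.16]{Zhu and chen}), exhibits an explicit \emph{right} inverse of the conjugated element $\sigma(aa^*)aa^{-}+1-aa^{-}$, and transfers right invertibility to $u$ by a second application of Lemma \ref{Jacobson} (treating only the right-invertible case and leaving the left case to symmetry); you instead write down the closed-form two-sided inverse $u^{-1}=\sigma^{-1}\bigl((a^\dag)^*a^\dag\bigr)aa^{-}+1-aa^\dag$ and verify $uu^{-1}=u^{-1}u=1$ from the identities $pe=e$, $ep=p$, $es=s$, $aa^*s=saa^*=p$, $paa^*=aa^*p=aa^*$ (all of which check out, including the one place the involution is needed, $aa^{-}(a^\dag)^*=(a^\dag)^*$). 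Your route is slightly stronger at that step, since it delivers genuine invertibility of $u$ at once --- something the paper only recovers afterwards, in Corollary \ref{MP}, by combining the left and right one-sided equivalences --- and it avoids the second pass through Jacobson's lemma; the paper's route is shorter to write and needs only the one-sided data $a\in aa^*aR$ rather than the full Moore--Penrose inverse. Your closing remark that one cannot simply invoke Theorem \ref{gmary} with $d=a^*$ because the idempotents $aa^{-}$, $a^{-}a$ do not match $(a^*)(a^*)^{-}$, $(a^*)^{-}(a^*)$ is also a correct and worthwhile observation.
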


\begin{proof} For simplicity, we just prove the case of right invertibility.

(ii) $\Leftrightarrow$ (iii) As $\sigma$ is a centralizer, then
$\sigma(aa^*)=a\sigma(a^*)$ and $\sigma(a^*a)=\sigma(a^*)a$. Pose
$x=-a$ and $y=\sigma(a^*)-a^{-}$. Then $1-xy=u$ is right invertible
if and only if $1-yx=v$ is right invertible by Lemma \ref{Jacobson}.

(i) $\Rightarrow$ (ii) Since $a\in R^\dag$, there exists $x\in R$
such that $a=aa^*ax$ from \cite[Theorem 2.16]{Zhu and chen}. As
\begin{eqnarray*}
&&(\sigma(aa^*)aa^{-}+1-aa^{-})(a\sigma^{-1}(x)a^{-}+1-aa^{-})\\
&=& \sigma(aa^*)a\sigma^{-1}(x)a^{-}+1-aa^{-}\\
&=&\sigma(aa^*a)\sigma^{-1}(x)a^{-}+1-aa^{-}\\
&=&aa^*axa^{-}+1-aa^{-}\\
&=&aa^{-}+1-aa^{-}\\
&=&1,
\end{eqnarray*}
then $\sigma(aa^*)aa^{-}+1-aa^{-}$ is right invertible. Again, Lemma
\ref{Jacobson} guarantees that $u=\sigma(aa^*)+1-aa^{-}$ is right
invertible.

(ii) $\Rightarrow$ (i) Note that $u$ and hence $v$ are both right
invertible. We have $av=\sigma(aa^*a)$. Hence,
$a=\sigma(aa^*a)v_r^{-1}=aa^*a\sigma(v_r^{-1})\in aa^*aR$, which
means $a\in R^\dag$ by \cite[Theorem 2.16]{Zhu and chen}.

As $ua=\sigma(aa^*a)$, then
$a=u_l^{-1}\sigma(aa^*a)=\sigma(u_l^{-1})aa^*a$. Hence,
$a^\dag=a^*(\sigma(u_l^{-1}))^2aa^*$ in terms of \cite[Theorem
2.20]{Zhu and chen}.

Similarly, $a=aa^*a\sigma(v_r^{-1})$ ensures that
$a^\dag=a^*a(\sigma(v_r^{-1}))^2a^*$ in terms of \cite[Theorem
2.19]{Zhu and chen}.
\end{proof}

\begin{remark} {\rm The expressions for the Moore-Penrose inverse $a^\dag$ in Theorem \ref{onesided MP} can also be given by $u_r^{-1}$ or $v_l^{-1}$.
Indeed, by Lemma \ref{Jacobson}, we have
$v_r^{-1}=1-(\sigma(a^*)-a^{-})u_r^{-1}a$. If we replace $v_r^{-1}$
by $1-(\sigma(a^*)-a^{-})u_r^{-1}a$ in the equality
$a^\dag=a^*a(v_r^{-1})^2a^*$. then the formula of $a^\dag$ can be
presented by $u_r^{-1}$. The similar process for $u_l^{-1}$ follows
the formula of $a^\dag$ by $v_l^{-1}$.}
\end{remark}

\begin{corollary} \label{Cor of onesided MP} Let $R$ be a ring with involution and let $a\in R$ be regular. Then the following conditions are equivalent{\rm:}

\emph{(i)} $a\in R^\dag$.

\emph{(ii)} $u=aa^*+1-aa^{-}$ is right (left) invertible.

\emph{(iii)} $v=a^*a+1-a^{-}a$ is right (left) invertible.

In this case, $a^\dag=a^*(u_l^{-1})^2aa^*=a^*a(v_r^{-1})^2a^*$,
where $u_l^{-1}$, $v_r^{-1}$ denote a left inverse of $u$ and a
right inverse of $v$, respectively.
\end{corollary}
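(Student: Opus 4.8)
The plan is to derive this corollary as the special case of Theorem \ref{onesided MP} obtained by taking $\sigma$ to be the identity map on $R$. So the first thing I would check is that the identity map is indeed an admissible choice of $\sigma$, i.e. that it is a bijective centralizer. This is immediate from Example \ref{Ex}(iii): writing $\sigma(x)=1\cdot x$, the element $1$ is an invertible central element, so $\sigma=\mathrm{id}$ is a bijective centralizer, and its inverse $\sigma^{-1}=\mathrm{id}$ is again a bijective centralizer. Since $a$ is assumed regular, all hypotheses of Theorem \ref{onesided MP} are met.

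With $\sigma=\mathrm{id}$ I would then simply substitute into the statement of Theorem \ref{onesided MP} and observe that every occurrence of $\sigma$ disappears. Concretely, $\sigma(aa^*)=aa^*$ and $\sigma(a^*a)=a^*a$, so the elements
$$u=\sigma(aa^*)+1-aa^{-}\qquad\text{and}\qquad v=\sigma(a^*a)+1-a^{-}a$$
of Theorem \ref{onesided MP} become exactly the elements $u=aa^*+1-aa^{-}$ and $v=a^*a+1-a^{-}a$ appearing in the corollary. The equivalence of (i), (ii), (iii) then transfers verbatim, for both the right-invertible and the left-invertible versions.

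For the explicit formulas, the same substitution applies: since $\sigma$ is the identity, $\sigma(u_l^{-1})=u_l^{-1}$ and $\sigma(v_r^{-1})=v_r^{-1}$, so the expressions
$$a^\dag=a^*\bigl(\sigma(u_l^{-1})\bigr)^2aa^*=a^*a\bigl(\sigma(v_r^{-1})\bigr)^2a^*$$
of Theorem \ref{onesided MP} collapse to $a^\dag=a^*(u_l^{-1})^2aa^*=a^*a(v_r^{-1})^2a^*$, which is precisely what the corollary asserts.

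I expect no real obstacle here, since the corollary is a pure specialization; the only point requiring a word of justification is the verification that the identity qualifies as a bijective centralizer, and that is handled by Example \ref{Ex}(iii). If one preferred a self-contained argument instead, one could repeat the proof of Theorem \ref{onesided MP} with $\sigma=\mathrm{id}$ throughout — using \cite[Theorem 2.16]{Zhu and chen} to pass between $a\in R^\dag$ and $a\in aa^*aR$, Lemma \ref{Jacobson} to transfer right/left invertibility between $u=aa^*+1-aa^{-}$ and $v=a^*a+1-a^{-}a$, and \cite[Theorems 2.19 and 2.20]{Zhu and chen} to produce the stated expressions for $a^\dag$ — but invoking Theorem \ref{onesided MP} directly is the shortest route.
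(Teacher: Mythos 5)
Your proposal is correct and matches the paper's intent exactly: the corollary is stated without proof precisely because it is the specialization of Theorem \ref{onesided MP} to $\sigma=\mathrm{id}$, just as the paper later obtains its final corollary by ``setting the centralizer $\sigma=1$'' in Corollary \ref{MP}. Your brief check that the identity map is a bijective centralizer (via Example \ref{Ex}(iii) with $x=1$) is the only point needing mention, and you handle it.
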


It follows from \cite[Theorem 3.3]{Zhu reverse} that another
Moore-Penrose inverse of $a$ in Corollary \ref{Cor of onesided MP}
can also be given as
$a^\dag=(u_l^{-1}a)^*a(u_l^{-1}a)^*=(av_r^{-1})^*a(av_r^{-1})^*$.

Given a regular element $a\in R$, it follows from Theorem
\ref{onesided MP} that $a\in R^\dag$ if and only if $u$ (resp., $v$)
is right invertible if and only if $u$ (resp., $v$) is left
invertible. Hence, we have the following corollary.

\begin{corollary} \label{MP} Let $R$ be a ring with involution and let $a\in R$ be regular. If $\sigma: R \rightarrow R$ is a bijective centralizer,
then the following conditions are equivalent{\rm:}

\emph{(i)} $a\in R^\dag$.

\emph{(ii)} $u=\sigma(aa^*)+1-aa^{-}$ is invertible.

\emph{(iii)} $v=\sigma(a^*a)+1-a^{-}a$ is invertible.

In this case,
$a^\dag=a^*(\sigma(u^{-1}))^2aa^*=a^*a(\sigma(v^{-1}))^2a$.
\end{corollary}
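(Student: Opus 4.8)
The plan is to derive this statement directly from Theorem \ref{onesided MP}, which already carries all of the substantive content; the corollary merely repackages its one-sided conclusions into a two-sided form. First I would recall that Theorem \ref{onesided MP} asserts, for a regular $a$ and a bijective centralizer $\sigma$, the chain of equivalences
$$a\in R^\dag \Leftrightarrow u \text{ is right invertible} \Leftrightarrow v \text{ is right invertible},$$
and, by reading the parenthetical ``(left)'' alternative in that same theorem,
$$a\in R^\dag \Leftrightarrow u \text{ is left invertible} \Leftrightarrow v \text{ is left invertible}.$$

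The key observation is that right invertibility of $u$ and left invertibility of $u$ are each, \emph{separately}, equivalent to the single condition $a\in R^\dag$. Consequently, whenever (i) holds, $u$ is simultaneously left and right invertible, hence invertible; conversely, if $u$ is invertible it is in particular right invertible, so (i) holds. This establishes (i) $\Leftrightarrow$ (ii). The identical argument applied to $v$ yields (i) $\Leftrightarrow$ (iii), completing the equivalence of the three conditions.

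For the explicit formula I would specialize the expressions already furnished by Theorem \ref{onesided MP}. When $u$ is invertible, $u^{-1}$ is in particular a left inverse, so I may take $u_l^{-1}=u^{-1}$ in $a^\dag=a^*(\sigma(u_l^{-1}))^2aa^*$; likewise $v^{-1}$ is a right inverse of $v$, so $v_r^{-1}=v^{-1}$ may be substituted into $a^\dag=a^*a(\sigma(v_r^{-1}))^2a^*$. This yields $a^\dag=a^*(\sigma(u^{-1}))^2aa^*=a^*a(\sigma(v^{-1}))^2a^*$, matching the asserted form.

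Since every step is a direct substitution into, or a logical rearrangement of, an already-proved theorem, I expect no genuine obstacle. The only point that requires care is the logical bookkeeping in the second paragraph: one must recognize that the separate one-sided equivalences coming out of Theorem \ref{onesided MP} \emph{jointly force} two-sided invertibility of $u$ (and of $v$), rather than attempting to prove invertibility of $u$ and $v$ from scratch via Jacobson's Lemma as was done in the theorem itself.
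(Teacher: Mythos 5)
Your proposal is correct and takes essentially the same route as the paper: the text immediately preceding the corollary deduces it from Theorem \ref{onesided MP} in exactly this way, noting that right and left invertibility of $u$ (resp.\ $v$) are each separately equivalent to $a\in R^\dag$, so (i) forces two-sided invertibility, and the formulas are obtained by specializing $u_l^{-1}=u^{-1}$, $v_r^{-1}=v^{-1}$. Incidentally, your final expression $a^*a(\sigma(v^{-1}))^2a^*$ is the one that genuinely follows from the theorem; the trailing factor $a$ in the corollary's printed formula appears to be a typographical slip.
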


\begin{remark} {\rm In Corollary \ref{MP} above, the expression of $a^\dag$ can also be given by $\sigma(u^{-1})a^*=a^*\sigma(v^{-1})$ by Theorem \ref{gmary}.}
\end{remark}

Setting the centralizer $\sigma=1$ in Corollary \ref{MP}, it follows
that

\begin{corollary} {\rm \cite[Theorem 2.1]{Patricio and Mendes}} Let $R$ be a ring with involution and let $a\in R$ be regular. Then the following conditions
 are equivalent{\rm :}

\emph{(i)} $a\in R^\dag$.

\emph{(ii)} $u=aa^*+1-aa^{-}$ is invertible.

\emph{(iii)} $v=a^*a+1-a^{-}a$ is invertible.

In this case, $a^\dag=a^*(u^{-1})^2aa^*=a^*a(v^{-1})^2a$.
\end{corollary}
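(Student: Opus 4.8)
The plan is to obtain this corollary as the special case $\sigma = \mathrm{id}_R$ of Corollary~\ref{MP}, so essentially no new work is required. First I would record that the identity map on $R$ is a legitimate bijective centralizer: taking the central invertible element to be $1$ in Example~\ref{Ex}(iii), the map $\sigma\colon a \mapsto 1\cdot a = a$ is a bijective centralizer, and moreover $\sigma^{-1} = \sigma = \mathrm{id}_R$. This is the only hypothesis-checking step, and it is immediate.

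With $\sigma = \mathrm{id}_R$ fixed, all the assumptions of Corollary~\ref{MP} hold for a regular element $a \in R$, so I would simply specialize its conclusion. Since $\sigma(aa^*) = aa^*$ and $\sigma(a^*a) = a^*a$, condition (ii) of Corollary~\ref{MP} becomes the invertibility of $u = aa^* + 1 - aa^-$ and condition (iii) becomes that of $v = a^*a + 1 - a^-a$, which yields the stated chain of equivalences (i)$\Leftrightarrow$(ii)$\Leftrightarrow$(iii) verbatim.

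For the explicit expressions, because $\sigma$ is the identity we have $\sigma(u^{-1}) = u^{-1}$ and $\sigma(v^{-1}) = v^{-1}$. Substituting these into the formula $a^\dag = a^*(\sigma(u^{-1}))^2 aa^* = a^*a(\sigma(v^{-1}))^2 a$ furnished by Corollary~\ref{MP} gives precisely $a^\dag = a^*(u^{-1})^2 aa^* = a^*a(v^{-1})^2 a$, as claimed. This also recovers \cite[Theorem 2.1]{Patricio and Mendes}.

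Since the whole argument is a pure specialization of the centralizer version, there is no genuine obstacle to anticipate; the only point demanding any care is the preliminary observation that $\mathrm{id}_R$ qualifies as a bijective centralizer, so that Corollary~\ref{MP} is applicable, and that is settled at once by Example~\ref{Ex}(iii).
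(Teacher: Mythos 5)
Your proposal is correct and is exactly the paper's own derivation: the paper obtains this corollary by ``setting the centralizer $\sigma=1$'' in Corollary~\ref{MP}, which is precisely your specialization to $\sigma=\mathrm{id}_R$, justified (as you note) by Example~\ref{Ex}(iii). No further comment is needed.
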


\centerline {\bf ACKNOWLEDGMENTS} This research was carried out by
the first author during his visit to the Department of Mathematics
and Applications, University of Minho, Portugal. He gratefully
acknowledges the financial support of China Scholarship Council.
This research is also supported by the National Natural Science
Foundation of China (No. 11371089), the Natural Science Foundation
of Jiangsu Province (No. BK20141327), the Scientific  Innovation
Research  of College Graduates in Jiangsu Province (No. CXLX13-072),
the Scientific Research Foundation of Graduate School of Southeast
University, the FEDER Funds through ¡®Programa Operacional Factores
de Competitividade-COMPETE', the Portuguese Funds through FCT-
`Funda\c{c}\~{a}o para a Ci\^{e}ncia e a Tecnologia', within the
project UID-MAT-00013/2013.
\bigskip

\end{document}